\documentclass[12pt,a4paper]{amsart}
\usepackage{mathrsfs}
\usepackage{amsfonts}
\usepackage{txfonts}

\usepackage{hyperref}
\usepackage{latexsym}
\usepackage{amssymb}

\usepackage{enumitem}

\newtheorem{theorem}{Theorem}[section]
\newtheorem{lemma}[theorem]{Lemma}

\newtheorem{problem}[theorem]{Problem}

\theoremstyle{definition}

\newtheorem{remark}[theorem]{Remark}

\newcommand{\eps}{\varepsilon}

\newcommand{\Lip}{\operatorname{Lip}}
\newcommand{\F}{\mathcal{F}}
\numberwithin{equation}{section}

\begin{document}

\title[isometric embedding]
{\bf $\varepsilon$-isometries without isometric embeddings}

\author{ Longfa Sun$^\ast$, Yipeng Zhang }

\address{Longfa Sun: Hebei Key Laboratory of Physics and Energy Technology, School of Mathematics and Physics, North China Electric Power University, Baoding, 071003, China}
 \email{sun.longfa@ncepu.edu.cn}

\address{Yipeng Zhang: School of Mathematical Sciences, Xiamen University, Xiamen, 361005, China}
\email{19020250157654@stu.xmu.edu.cn}

\thanks{$^\ast$ Corresponding author.\\This work is supported by  the Fundamental Research Funds for the Central Universities (Grant no. 2025MS178). }

\date{}

\begin{abstract}
We show that there exist two separable real Banach spaces $X$ and $Y$
such that, for every $\varepsilon>0$, there is a standard exact $\varepsilon$-isometry
$f_\varepsilon: X\to Y$ whose distortion of every distance is between $0$
and $\varepsilon$, whereas no isometric embedding of $X$ into $Y$ exists,
even without a linearity assumption. This gives a negative answer to \cite[Problem 1 and Problem 2]{cheng4} of Cheng and Zhou. The construction combines a
snowflaked Lipschitz-free space with the Schur-property theorem of Kalton
and the isometric linearization theorem of Godefroy and Kalton.
\end{abstract}

\keywords{$\varepsilon$-isometries, isometries, Lipschitz-free spaces}

\subjclass{46B04, 46B20 }

\maketitle

\section{introduction}

Let $X$ and $Y$ be two real Banach  spaces. A map $f: X\rightarrow Y$ is called an $\varepsilon$-isometry for some $\varepsilon\geq0$ provided
\begin{equation*}
\big|\|f(x)-f(y)\|_Y-\|x-y\|_X\big|\leq\varepsilon,\;\forall\;x, y\in X.
\end{equation*}
$f$ is called standard if $f(0)=0$ and $0$-isometry is simply called isometry.

\emph{Isometries and linear isometries}\quad
The celebrated Mazur-Ulam theorem \cite{ma} states that every surjective standard isometry between real Banach spaces is necessarily linear.
For non-surjective isometries, Figiel \cite{figiel} extended Mazur-Ulam theorem by showing that: every standard isometry $f: X\rightarrow Y$ admits a linear left-inverse $T: \overline{span} (f(X))\rightarrow X$ of norm one, i.e., $T\circ f=Id_{X}$ with $\|T\|=1$.
Combining  Figiel's result and the Lipschitz-free spaces,
Godefroy and Kalton \cite{gode}  showed the following deep isometric linearization theorem, which resolved a long-standing open problem whether the existence of an isometry implies the existence of a linear isometry.
\begin{theorem}[Godefroy--Kalton]\label{thm:GK}
If $X$ is a separable Banach space and there is an isometry $f: X\rightarrow Y$, then $Y$ contains a linear subspace linearly isometric to $X$; for every nonseparable weakly compactly generated space $X$, there exists a Banach space $Y$ so that $X$ can be isometrically embedded into $Y$, but $X$ is not linearly isomorphic to any subspace of $Y$.
\end{theorem}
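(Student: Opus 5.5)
Godefroy and Kalton's theorem has two halves, and I would prove them by different routes, using Figiel's theorem and the Lipschitz-free space $\F(X)$ as the main tools.

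\emph{Separable case.} Let $f:X\to Y$ be an isometry and assume $f(0)=0$ after translating. Figiel's theorem then gives a norm-one linear map $T:\overline{span}(f(X))\to X$ with $T\circ f=Id_X$. Next I use the universal property of $\F(X)$. Let $\delta:X\to\F(X)$ be the canonical isometric embedding and $\beta:\F(X)\to X$ the barycenter map, so $\beta\circ\delta=Id_X$. The isometry $f$ extends to a linear map $\hat f:\F(X)\to \overline{span}(f(X))\subset Y$ with $\hat f\circ\delta=f$ and $\|\hat f\|\le 1$. Moreover $T\circ\hat f$ and $\beta$ are both linear and agree on $\delta(X)$, so $T\circ\hat f=\beta$.

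It therefore suffices to find a linear isometric embedding $S:X\to\F(X)$ such that $\beta\circ S=Id_X$. With such an $S$, for every $x\in X$,
\[
\|x\|=\|\beta S x\|=\|T\hat f S x\|\le\|\hat f Sx\|\le\|Sx\|=\|x\|,
\]
so $\hat f\circ S$ is a linear isometry of $X$ into $Y$.

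The main obstacle is constructing $S$. I would take $S$ to be the derivative of $\delta$, obtained through Gâteaux differentiability. Since $X$ is separable, Lipschitz maps into spaces with the Radon--Nikodým property are Gâteaux differentiable almost everywhere by Christensen/Aronszajn. That result does not apply directly, because $\F(X)$ need not have the RNP. So I would pass to duals. For $\phi\in \Lip_0(X)=\F(X)^*$, each scalar function $\phi$ is Gâteaux differentiable off an Aronszajn-null set. Using separability, choose a point $x_0$ and a linear map $S$ with $\langle Sx,\phi\rangle=\lim_t \langle\delta(x_0+tx)-\delta(x_0),\phi\rangle/t$ for a suitable norming countable family. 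Then $\|Sx\|\le\|x\|$ because $\delta$ is 1-Lipschitz, and $\beta Sx=x$ because $\beta\circ\delta=Id_X$ and $\beta$ is linear and bounded. The identity $\beta S=Id_X$ in turn forces $\|Sx\|\ge\|x\|$. Carrying this out needs care: one must produce an actual element of $\F(X)$, for instance via a weak$^*$ limit in $\F(X)^{**}$ followed by a projection argument, or by showing the difference quotients converge weakly using Godefroy--Kalton's lemma on differentiability of $\delta$ along a dense set of points. This is where I expect the technical difficulty to lie.

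\emph{Nonseparable WCG case.} Take $Y=\F(X)$ with $f=\delta$, which is an isometric embedding. I would show that $X$ is not linearly isomorphic to any subspace of $\F(X)$ when $X$ is nonseparable WCG. The planned route is to use the fact that a nonseparable WCG space has a nonseparable reflexive quotient or subspace structure that yields a nonseparable weakly compact set in $X$. Then I would show that every weakly compact subset of $\F(X)$ that is contained in a linear copy of $X$ must be separable. This should follow from the density-character and weak-compactness properties of free spaces over nonseparable sets, namely that weakly compact subsets of $\F(X)$ are separable when $X$ is WCG-like. The resulting contradiction with the nonseparable weakly compact set in $X$ finishes the proof.
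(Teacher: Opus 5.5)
The paper does not prove this statement. It quotes it from \cite{gode} and uses only the separable half, as a black box. Measured against Godefroy--Kalton's argument, your separable reduction is exactly theirs: Figiel's left inverse $T$, the linearization $\hat f$ with $T\circ\hat f=\beta$, and a linear lifting $S$ with $\beta\circ S=Id_X$. Your closing chain of inequalities is correct.

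The gap is the construction of $S$, which is the whole content of that half, and the mechanism you propose cannot work. The map $\delta$ is nowhere weakly G\^ateaux differentiable in any nonzero direction. Given $x_0$ and $\|x\|=1$, pick $x^*$ with $\|x^*\|=x^*(x)=1$. Then $\phi(u)=|x^*(u-x_0)|-|x^*(x_0)|$ lies in $\Lip_0(X)$, and $t\mapsto\phi(x_0+tx)$ is not differentiable at $0$. Consequences for your proposal:
\begin{enumerate}[label=\textup{(\alph*)}]
\item No base point works, and there is no ``lemma on differentiability of $\delta$ along a dense set''.
\item Prescribing derivatives on a countable norming family only produces numbers. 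It gives no element of $\F(X)$ realizing them, and that existence is the entire question.
\item Every weak$^*$ cluster point $\xi\in\F(X)^{**}$ of $q_t=(\delta(x_0+tx)-\delta(x_0))/t$, $t\downarrow0$, is far from $\F(X)$. Let $\phi_n(u)=\max\{0,\min\{x^*(u-x_0),\,r_n-\|u-x_0\|\}\}$ with $r_n\downarrow0$. These are $1$-Lipschitz, vanish at $0$ for small $r_n$, and tend to $0$ pointwise, hence weak$^*$. Yet $\langle q_t,\phi_n\rangle=1$ for $0<t\le r_n/2$. So $\langle\xi,\phi_n\rangle=1$ for all $n$, while $\langle\mu,\phi_n\rangle\to0$ for every $\mu\in\F(X)$. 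Hence $\operatorname{dist}(\xi,\F(X))\ge1\ge\|\xi\|$, so $0$ is a nearest point of $\F(X)$ to $\xi$. A projection back into $\F(X)$ has no reason to keep the barycenter equal to $x$, and you do not supply one that does.
\end{enumerate}
The missing idea, which is the heart of Godefroy--Kalton's proof, is to \emph{average} the derivative against a measure rather than evaluate it at a point. Take linearly independent $x_n$ with dense span and $\sum\|x_n\|<\infty$. Let $\mu$ be the law of $\sum_n t_nx_n$ with $t_n$ independent and uniform on $[0,1]$, and set $\langle Sx_n,\phi\rangle=\int\partial_{x_n}\phi\,d\mu$. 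By Fubini and the fundamental theorem of calculus in $t_n$, this equals $\langle\int(\delta(v+x_n)-\delta(v))\,d\nu_n(v),\phi\rangle$, where $\nu_n$ is the law of $\sum_{m\ne n}t_mx_m$. This is the Bochner integral of a continuous map over a compact set, so it genuinely lies in $\F(X)$. Rademacher's theorem on the finite-dimensional slices gives $|\langle Sx,\phi\rangle|\le\|\phi\|_{\Lip}\|x\|$ on $\operatorname{span}\{x_n\}$, so $S$ extends with $\|S\|\le1$. Testing against $x^*\in X^*\subset\Lip_0(X)$ gives $\beta\circ S=Id_X$.

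For the nonseparable half, $Y=\F(X)$ with $f=\delta$ is the right choice, but the argument is missing. Your claim that a nonseparable WCG space has a nonseparable reflexive subspace or quotient is false. For uncountable $\Gamma$, $c_0(\Gamma)$ is WCG, yet all its reflexive subspaces and quotients are finite-dimensional. The claim is also unnecessary: if $X=\overline{span}(K)$ with $K$ weakly compact, then $K$ itself is nonseparable. Everything therefore rests on the assertion that weakly compact subsets of $\F(X)$ are separable. For this you give no mechanism; appealing to ``density-character and weak-compactness properties of free spaces'' restates the claim rather than proving it. The assertion is true. For instance, it follows from the compact reduction theorem of Aliaga, No\^us, Petitjean and Proch\'azka: for $M$ complete, every weakly precompact $W\subset\F(M)$ lies in $\F(K_\eps)+\eps B_{\F(M)}$ for some compact $K_\eps\subset M$, so $W\subset\F(\overline{\bigcup_n K_{1/n}})$. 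But it is the substance of this half of the theorem, not a remark you can take for granted.
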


\emph{$\varepsilon$-Isometries and  isometries}
In 1945, Hyers and Ulam \cite{hy1} first investigated $\varepsilon$-isometries  and asked:
for every standard surjective $\eps$-isometry $f: X\rightarrow Y$, does there exist some linear surjective isometry $U: X\rightarrow Y$ such that  $f-U$ is  uniformly bounded on $X$?
After 50 years efforts of a number of mathematicians, the following sharp estimate was finally obtained by Omladi\v{c} and \v{S}emrl \cite{om}.
\begin{theorem} [Omladi\v{c}-\v{S}emrl]
If $f:X\rightarrow Y$ is a standard surjective $\varepsilon$-isometry, then there is a surjective linear isometry $U:X\rightarrow Y$ such that
\begin{equation*}
\|f(x)-U(x)\|_Y\leq2\varepsilon,\;\forall\; x\in X.
\end{equation*}
\end{theorem}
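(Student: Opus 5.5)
The plan is in three stages. First, show that $f$ stays within bounded distance of some surjective isometry $U$. Second, identify $U$ as the Hyers--Ulam limit of $f$ and deduce that it is linear. Third, sharpen the bound to exactly $2\varepsilon$ by testing against norm-one functionals.

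\emph{Stage 1: boundedness.} The expected main obstacle is proving that $\sup_x\|f(x)-U(x)\|<\infty$ for some surjective isometry $U$, with no assumption on the geometry of $X$ or $Y$. This is Gevirtz's theorem. I would follow his route: use surjectivity to find, for each $x$, approximate metric midpoints of $f(0)$ and $f(2x)$. Then show that $f(x)$ lies within a distance $O(\varepsilon)$ of the set of approximate midpoints; this is controlled through a ``diameter of almost-midpoint sets'' argument in the spirit of the Mazur--Ulam proof. Finally, iterate this to see that $\|f(2x)-2f(x)\|\le C\varepsilon$ for a universal constant $C$.

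\emph{Stage 2: the limit and its linearity.} Granted the estimate $\|f(2x)-2f(x)\|\le C\varepsilon$, the sequence $f(2^nx)/2^n$ is Cauchy, uniformly in $x$. Define $U(x)=\lim_n 2^{-n}f(2^nx)$. Dividing the $\varepsilon$-isometry inequality by $2^n$ and letting $n\to\infty$ shows that $U$ is a standard isometry. Since $\|U-f\|_\infty\le C\varepsilon$ and $f$ is surjective, $U$ has dense range. A complete isometric image is closed, so $U$ is surjective, and the Mazur--Ulam theorem then gives that $U$ is linear.

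\emph{Stage 3: the sharp constant.} To improve $C\varepsilon$ to $2\varepsilon$, I would argue through functionals, since
\[
\|f(x)-Ux\|=\sup_{\varphi\in S_{Y^*}}\varphi(f(x)-Ux).
\]
Fix $\varphi\in S_{Y^*}$ and set $\psi=\varphi\circ U\in S_{X^*}$. Given $\delta>0$, choose $z\in S_X$ with $\psi(z)>1-\delta$. For large $t$ we have
\[
\varphi(f(tz))-\varphi(f(x))\le \|tz-x\|+\varepsilon.
\]
Moreover $\varphi(f(tz))=t\psi(z)+O(1)$, because $f$ is within bounded distance of the linear map $U$. The delicate point is that $\|tz-x\|-t$ must be compared with $-\psi(x)$, which requires $z$ to be almost normed by $\psi$ in a way that is uniform for large $t$. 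I would handle this by taking $z$ from a Bishop--Phelps/Brøndsted--Rockafellar type approximation, then letting $t\to\infty$ and $\delta\to 0$. Running the same argument with $tz$ replaced by a point $y$ with $f(y)$ chosen via surjectivity gives the one-sided bounds
\[
\varphi(Ux)-\varphi(f(x))\le\varepsilon+(\text{error})\quad\text{and}\quad \varphi(f(x))-\varphi(Ux)\le\varepsilon+(\text{error}),
\]
with the errors tending to zero. I expect to lose only a factor of two in combining them, which yields $|\varphi(f(x)-Ux)|\le 2\varepsilon$. Taking the supremum over $\varphi$ finishes the proof.
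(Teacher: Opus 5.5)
The paper does not prove this statement (it is quoted from Omladi\v{c}--\v{S}emrl), so I judge your sketch on its own. Stages 1--2 are in effect a citation of Gevirtz plus the Hyers limit. Stage 3 is the only part that concerns the constant $2\eps$, and it has a genuine gap, in three places.

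\emph{The $O(1)$ term.} In $\varphi(f(tz))=t\psi(z)+O(1)$, the $O(1)$ can be as large as $C\eps$ and does not tend to $0$ as $t\to\infty$. It is exactly the error you are trying to remove, so that bound yields $(C+1)\eps$ at best, not $\eps+o(1)$.

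\emph{Bishop--Phelps/Br\o ndsted--Rockafellar.} These give norm attainment, not smoothness. Even with $\psi(z)=1$ one has $\lim_{t\to\infty}(\|tz-x\|-t)=-\min\{\chi(x):\chi\in S_{X^*},\ \chi(z)=1\}$, and this can exceed $-\psi(x)$ unless $\psi$ is the G\^ateaux derivative of the norm at $z$. For instance, in $\ell_1^2$ with $\psi=(1,0)$ and $x=(0,1)$, every functional near $\psi$ attains its norm only at the vertex $(1,0)$, and $\|t(1,0)-x\|_1-t=1$. So you cannot treat every $\varphi$ directly.

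\emph{The bookkeeping.} An upper and a lower bound of $\eps+o(1)$ would give $|\varphi(f(x)-Ux)|\le\eps$ outright; nothing is lost in combining them. That conclusion is false. On $\mathbb{R}$, take $f(x)=-x$ for $0\le x\le\eps$ and $f(x)=x-\eps$ otherwise. This is a standard surjective $\eps$-isometry with $U=\mathrm{id}$ and $f(\eps)-U\eps=-2\eps$. The $2\eps$ must appear inside each one-sided bound.

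What works is to use surjectivity in both directions, and only for a norming set of functionals. Suppose $\psi=\varphi\circ U$ is the G\^ateaux derivative of the norm at $z\in S_X$, and let $K=\sup_x\|f(x)-Ux\|$. Pick $w_t$ with $f(w_t)=tUz$. Then $\varphi(f(w_t))=t$ exactly, $\|w_t-tz\|\le K$, and $\|w_t\|\le t+\eps$ by comparison with $f(0)=0$. Hence
\[
\varphi(f(x))\ \ge\ t-\|w_t-x\|-\eps\ \ge\ \|w_t\|-\|w_t-x\|-2\eps\ \longrightarrow\ \psi(x)-2\eps .
\]
The limit is uniform over the unknown perturbation $w_t-tz$, because directional derivatives of the norm are upper semicontinuous in the base point and $z$ is smooth. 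Choosing $w_t'$ with $f(w_t')=-tUz$ gives $\varphi(f(x))\le\psi(x)+2\eps$ in the same way. Such $\varphi$ form a norming set when $X$ is separable, by Mazur's theorem. For nonseparable $X$ you still need a separable reduction; for example, $\ell_1(\Gamma)$ with $\Gamma$ uncountable has no smooth points.

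Two smaller points. Your Stage 1 midpoint description is not an argument: $f(x)$ is trivially an approximate midpoint of $f(0),f(2x)$, and in a general Banach space such midpoint sets have diameter comparable to $\|x\|$, which is why Gevirtz's work is needed. In Stage 2, bounded distance from the surjection $f$ only makes $U(X)$ a $C\eps$-net; density needs $U(2^nx)=2^nU(x)$.
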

For non-surjective $\varepsilon$-isometries, Cheng and Zhou \cite{cheng4} showed that every standard  $\varepsilon$-isometry $f:X\rightarrow Y$ induces a linear closed subspace $N$ of $Y^\ast$ and a linear surjective isometry $U: X^{\ast\ast}\rightarrow N^\perp$.  Thus $X\subset X^{\ast\ast}$ is linearly isometric to a subspace of $Y^{\ast\ast}$. If $Y$ is reflexive, then $Y$ contains a linear isometric copy of $X$. For general Banach spaces, they raised the following two fundamental  and interesting problems.

\begin{problem}\label{problem1}\cite[Problem 1]{cheng4}
For two Banach spaces $X$, $Y$, if there is an $\varepsilon$-isometry $f:X\rightarrow Y$ for some
$\varepsilon>0$, does $Y$ contain an isometric (not necessarily linear) copy of $X$?
\end{problem}

\begin{problem}\label{problem2}\cite[Problem 2]{cheng4}
For two Banach spaces $X$, $Y$, if $X$ is separable and if there is an $\varepsilon$-isometry $f:X\rightarrow Y$ for some
$\varepsilon>0$, is $X$ linearly isometric to a subspace of $Y$?
\end{problem}
In this paper, we show the following theorem which gives a negative answer to Problem \ref{problem1} and Problem \ref{problem2}  in a stronger form which means that we can construct the $\varepsilon$-isometry such that it is injective uniformly continuous and its inverse on its
range is $1$-Lipschitz.

\begin{theorem}\label{thm:main}
There are fixed separable real Banach spaces $X$ and $Y$ such that the
following assertions hold.
\begin{enumerate}[label=\textup{(\roman*)}]
\item For every $\varepsilon>0$, there is an injective map
$f_\varepsilon:X\to Y$ satisfying
\begin{equation}\label{eq:eps}
 0\leq \|f_\varepsilon(x)-f_\varepsilon(y)\|_Y-
       \|x-y\|_X\leq \varepsilon,\;
 \forall\;x,y\in X.
\end{equation}
Moreover, $f_\varepsilon$ is uniformly continuous and its inverse on its
range is $1$-Lipschitz.
\item There is no isometric embedding $g:X\to Y$, whether or not $g$ is
assumed linear.
\end{enumerate}
One may take $X=(\ell_2,
\;\|\cdot\|_2)$ and
\[
 Y=\F_{\omega}(\ell_2),\qquad
 \omega(t)=\max\{t,\sqrt t\}.
\]
\end{theorem}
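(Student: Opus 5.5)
We take $X=\ell_2$ and $Y=\F_\omega(\ell_2)$, the Lipschitz-free space over the metric space $(\ell_2,\omega(\|x-y\|_2))$ with base point $0$. Since $\omega$ is concave, increasing and $\omega(0)=0$, the function $\omega(\|x-y\|)$ is a metric on $\ell_2$; it is separable, so $Y$ is separable. Let $\delta:\ell_2\to Y$ be the canonical Dirac embedding. It satisfies $\|\delta(x)-\delta(y)\|_Y=\omega(\|x-y\|_2)$.

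\textbf{Part (i).} We rescale the Dirac map. For $\lambda>0$ set $f_\varepsilon(x)=\lambda\,\delta(x/\lambda)$. Then
\[
\|f_\varepsilon(x)-f_\varepsilon(y)\|_Y=\lambda\,\omega(t/\lambda)=\max\{t,\sqrt{\lambda t}\},\qquad t=\|x-y\|_2 .
\]
This is at least $t$, which gives the left inequality in \eqref{eq:eps}. It also shows that $f_\varepsilon$ is injective and that its inverse on the range is $1$-Lipschitz.

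For the upper bound, note that $\max\{t,\sqrt{\lambda t}\}-t$ vanishes for $t\ge\lambda$. For $t<\lambda$ it equals $\sqrt{\lambda t}-t$, whose maximum is $\lambda/4$, attained at $t=\lambda/4$. Choosing $\lambda=4\varepsilon$ gives the bound $\varepsilon$, and we may also take $f_\varepsilon(0)=0$. Uniform continuity follows because the modulus $\max\{t,\sqrt{\lambda t}\}$ tends to $0$ as $t\to0$.

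\textbf{Part (ii).} Suppose an isometry $g:\ell_2\to Y$ exists. Since $\ell_2$ is separable, Theorem~\ref{thm:GK} provides a linear isometric copy of $\ell_2$ inside $Y$. We will contradict this with Kalton's theorem: if $M$ is a metric space of the form $(M,\omega\circ d)$ with $\omega$ a nontrivial gauge, meaning $\lim_{t\to0}\omega(t)/t=\infty$, then $\F(M)$ has the Schur property. Here $\omega(t)/t=t^{-1/2}\to\infty$ as $t\to0$, so Kalton's theorem applies. I would verify that its hypotheses hold for unbounded $M$, or else reduce to that case. Kalton's result is stated for gauges satisfying $\omega(t)/t\to\infty$ at $0$, and the behaviour of $\omega$ at infinity ($\omega(t)=t$ for $t\ge1$) only matters for the uniformly discrete part.

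Once the Schur property is available, every closed subspace of $Y$ is also Schur. But $\ell_2$ is not Schur: the unit vectors $e_n$ converge weakly to $0$ but have norm $1$. Hence $Y$ contains no linearly isomorphic copy of $\ell_2$, let alone an isometric one, and so no isometry $g$ exists, linear or not.

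The main obstacle is justifying Kalton's Schur-property theorem in exactly this setting: an unbounded metric space with a gauge that is snowflake-like only at small scales. If the published statement needs a bounded or uniformly discrete modification, I would first try the standard route. One decomposes $\F_\omega(\ell_2)$ by Kalton's dyadic-annuli technique, which expresses it as isomorphic to a subspace of an $\ell_1$-sum of the spaces $\F_\omega(B_{2^k})$. Each summand has the Schur property since $\omega$ is a nontrivial gauge on bounded sets. An $\ell_1$-sum of Schur spaces is Schur, and the Schur property passes to subspaces and is stable under isomorphism, which completes the reduction.
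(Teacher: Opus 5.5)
Your proof follows the paper's argument: the same rescaled Dirac map $f_\varepsilon(x)=4\varepsilon\,\delta(x/4\varepsilon)$ with exact distance $\max\{t,2\sqrt{\varepsilon t}\}$, and the same exclusion of isometries via Godefroy--Kalton plus Kalton's Schur theorem. Part (i) and the Schur argument are correct.

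One justification is false and needs replacing. The function $\omega(t)=\max\{t,\sqrt t\}$ is \emph{not} concave: its slope jumps from $1/2$ to $1$ at $t=1$. For instance,
\[
\omega(1)=1<\tfrac12\bigl(\omega(\tfrac12)+\omega(\tfrac32)\bigr).
\]
So concavity cannot be the reason $\omega(\|x-y\|_2)$ is a metric. Use subadditivity instead: $\omega(a+b)\le\omega(a)+\omega(b)$, because $a+b\le\omega(a)+\omega(b)$ and $\sqrt{a+b}\le\sqrt a+\sqrt b$. Combined with monotonicity, this gives the triangle inequality.

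The same check is also needed before you invoke Kalton's theorem. That theorem requires $\omega$ to be a gauge: continuous, increasing, subadditive, $\omega(0)=0$, and $\omega(t)\ge t$ on $[0,1]$. You verified only the nontriviality condition $\omega(t)/t\to\infty$, so state these properties explicitly.

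Finally, your worry about unbounded $M$ is unnecessary. Kalton's Theorem~4.6 is stated for arbitrary pointed metric spaces, as the paper notes, so you can drop the dyadic-annuli fallback.
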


\section{preliminaries}

Let $(M,d)$ be a pointed metric space with a distinguished point (the origin) which we may
denote by $0$, $\Lip_0(M)$ is the Banach space of
all real-valued Lipschitz functions  $h: M\rightarrow \mathbb{R}$ with $h(0)=0$ under the norm
\[
 \|h\|_{\Lip}=\sup\left\{\frac{|h(u)-h(v)|}{d(u,v)}:\;u,v\in M,\;u\neq v\right\}.
\]

For $u\in M$, let $\delta_M(u)\in\Lip_0(M)^*$ be evaluation at $u$.  The
\emph{Lipschitz-free space} over $M$ is
\[
 \F(M)=\overline{\operatorname{span}}\{\delta_M(u):u\in M\}
 \subseteq \Lip_0(M)^*.
\]
The canonical map $\delta=\delta_M: M\to \F(M)$ is an isometric embedding and
$\delta(0)=0$.

If $(M,d)$ is pointed and $\omega:[0,\infty)\to[0,\infty)$ is such that
$d_\omega=\omega\circ d$ is a metric, we write
\[
 \F_\omega(M)=\F(M,d_\omega).
\]

Following Kalton's terminology,  a \emph{gauge} is a continuous, increasing,
subadditive function $\omega:[0,\infty)\to[0,\infty)$ such that
$\omega(0)=0$ and $\omega(t)\geq t$ for $0\leq t\leq1$.  It is
\emph{nontrivial} when
\[
 \lim_{t\downarrow0}\frac{\omega(t)}{t}=\infty.
\]
It is \emph{strongly normalized} when $\omega(t)=t$ for every $t\geq1$.

Recall that a Banach space $X$ has the \emph{Schur property} if every weakly
convergent sequence in $X$ converges in norm.

The following result is due to Kalton and is well-known as Kalton's Schur theorem.
\begin{theorem}\cite[Theorem~4.6]{kalton}
\label{thm:kalton}
If $M$ is any pointed metric space and $\omega$ is a nontrivial gauge,
then $\F_\omega(M)$ has the Schur property.
\end{theorem}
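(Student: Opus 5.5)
Since this is Kalton's theorem and the paper only cites it, I will sketch how I would reproduce his argument: a gliding-hump argument over scales. The nontriviality of $\omega$ is what lets the scales be separated.

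\textbf{Reduction.} I argue by contradiction. Let $(\mu_n)$ be weakly null in $\F_\omega(M)$ with $\|\mu_n\|\geq 1$ for all $n$. By density of finitely supported measures and a perturbation, I may assume each $\mu_n$ is finitely supported. The target is a single $h\in\Lip_0(M,d_\omega)$ and a subsequence $(n_j)$ with $\langle \mu_{n_j},h\rangle\geq c>0$ for all $j$, which contradicts weak nullity.

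\textbf{Step 1: norming by little-Lipschitz functions.} Because $\lim_{t\downarrow0}\omega(t)/t=\infty$, I expect the functions in $\Lip_0(M,d_\omega)$ that are \emph{flat at small scales} to be almost norming for $\F_\omega(M)$. Concretely, these are the $h$ with $|h(u)-h(v)|\leq \eta\,\omega(d(u,v))$ whenever $d(u,v)<\delta$, for a prescribed small $\eta$ and a suitable $\delta$. I would get them by composing a norming functional with an inf-convolution at scale $\delta$, namely replacing $g$ by $\inf_w\{g(w)+\eta^{-1}d(u,w)\}$-type regularizations. This works because $\omega(t)\gg t$ for small $t$, so an ordinary Lipschitz modulus $Ct$ is dominated by $\eta\,\omega(t)$ once $t$ is small. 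The same device works at large scales: by subadditivity, $\omega$ is at most linear at infinity, so truncation or cut-off at a large radius costs little. Hence for each finitely supported $\mu$ and each $\eta>0$ there are scales $\delta<R$ such that $\mu$ is $(1-\eta)$-normed by an $h$ of norm at most $1$ whose "action" is concentrated in the annulus of distances $[\delta,R]$.

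\textbf{Step 2: gliding hump.} I choose $n_1$ and a norming $h_1$ concentrated at scales $[\delta_1,R_1]$. Weak nullity lets me pick $n_2$ with $|\langle\mu_{n_2},h_1\rangle|$ tiny. Step 1, applied to $\mu_{n_2}$ with the extra requirement of ignoring scales already used, gives $h_2$ concentrated at scales $[\delta_2,R_2]$ with $R_2<\delta_1$ (or $\delta_2>R_1$). I then set $h=\sum_j h_j$, with signs or small weights chosen so that the cross terms $\langle\mu_{n_j},h_i\rangle$ are summably small. The key estimate is that $\|h\|_{\Lip(d_\omega)}\leq C$ uniformly, because the $h_j$ live on disjoint, rapidly separated scales. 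For a pair $u,v$ only the $h_j$ whose scale matches $d(u,v)$ contributes substantially. The others contribute at most $\eta_j\,\omega(d(u,v))$ (too-fine scales, by flatness) or at most $(d(u,v)/R_j)\,\omega(d(u,v))$-type amounts (too-coarse scales), and both series are summable by choosing $\eta_j$ and the scale gaps geometrically.

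\textbf{Main obstacle.} The hardest step is Step 1 in the uniform form required by Step 2: making each $h_j$ simultaneously nearly norming for $\mu_{n_j}$ and genuinely localized in scale, so that the sum has bounded $d_\omega$-Lipschitz constant. Localizing the small scales is where the nontrivial gauge is indispensable; without it the theorem fails, since for example $\F(\mathbb{R})\cong L_1$ is not Schur. What I would try first is Kalton's device of writing $\mu$ as $\sum_k \mu^{(k)}$, where $\mu^{(k)}$ lives on a $2^{-k}$-net with controlled $\sum_k\|\mu^{(k)}\|$. This realizes $\F_\omega(M)$ inside an $\ell_1$-sum of free spaces over uniformly discrete metric spaces, each of which has the Schur property. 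Since an $\ell_1$-sum of Schur spaces is Schur, the theorem would follow directly from that embedding.
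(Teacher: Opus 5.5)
The paper does not prove this statement; it quotes Kalton's Theorem~4.6. So your sketch has to stand on its own, and as written it does not: in both of your routes the step that carries the whole difficulty is asserted rather than proved.

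Your reduction to finitely supported $\mu_n$ is fine. So is the small-scale flattening of a single norming function by inf-convolution. The problem is Step~2, which needs what one might call \emph{band control}: for fixed $0<\delta<R$,
\[
 \sup\bigl\{|\langle\mu_n,h\rangle| : h \text{ in the unit ball of functions living at scales in } [\delta,R]\bigr\}\longrightarrow 0 .
\]
Weak nullity gives $\langle\mu_n,h\rangle\to0$ only for each fixed $h$, not uniformly on such a ball. Without band control, nothing forces $\mu_{n_j}$ to be almost normed on a window disjoint from the earlier ones; a priori the $\mu_n$ could all live at one scale and move through space. You also do not get the smallness of $\|h_j\|_\infty$ that makes $h_j$ harmless at distances above its window. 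The remark that $\omega$ is at most linear at infinity is irrelevant to this.

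For compact $M$, band control is true. By Arzel\`a--Ascoli and $\omega(t)/t\to\infty$, the set of $h$ with $h(0)=0$ and $|h(u)-h(v)|\leq\min\{L\,d(u,v),\omega(d(u,v))\}$ is norm compact in $\Lip_0(M,d_\omega)$, so weakly null sequences tend to $0$ uniformly on it. Now write a norming $f$ for $\mu_{n_j}$ as $Sf+(f-Sf)$, where $Sf(u)=\inf_w\{f(w)+2\frac{\omega(t')}{t'}d(u,w)\}$. The first summand lies in a fixed multiple of that compact set, and $0\leq f-Sf\leq\omega(t')$. Together with your flattening, the hump then goes through. For non-compact $M$, in particular $M=\ell_2$, which is the case the paper needs, this compactness is lost. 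There, band control is essentially the Schur property at a fixed scale, i.e.\ the real content of the theorem.

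Your second route has the same hole in another form. To realize $\F_\omega(M)$ inside an $\ell_1$-sum, the assignment $\mu\mapsto(\mu^{(k)})_k$ must be linear with $\sum_k\|\mu^{(k)}\|\leq C\|\mu\|$, and you do not say how to build it. The obvious construction, telescoping nearest-point maps $p_k$ onto $2^{-k}$-nets, already fails at a single level. For connected $M$ such as $\ell_2$ there are arbitrarily close $x,y$ with $p_k(x)\neq p_k(y)$, hence $\|\delta(p_kx)-\delta(p_ky)\|\geq\omega(2^{-k})$. So $x\mapsto\delta(p_kx)$ is not $d_\omega$-Lipschitz and induces no bounded operator on $\F_\omega(M)$. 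Smoothing with Lipschitz partitions of unity subordinate to the nets needs multiplicity control that general metric spaces lack: in $\ell_2$, every ball of radius $3t$ contains infinitely many points of any $t$-net. The missing construction is exactly where the nontriviality of $\omega$ would have to enter.

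Note also that the Schur property of $\F(N)$ for uniformly discrete $N$ itself needs an argument. For instance, combine Kalton's $\ell_1$-decomposition over dyadic balls around the base point with $\F(N)\simeq\ell_1$ for bounded uniformly discrete $N$. You should quote this rather than take it for granted.
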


Note that the above theorem applies to unbounded metric spaces; no boundedness or
separability hypothesis is present in its statement. And, if $M$ is separable, then
$\F(M)$ is separable. Since for a countable dense subset $D\subseteq M$
containing $0$, the rational linear span of $\delta(D)$ is dense in
$\F(M)$.

\section{The counterexample}

Let
\[
\omega(t)=\max\{t,\sqrt t\},\;t\geq0,
\]
and $\tilde{X}=(\ell_2, \rho)$ be the space equipped with the pointed metric
\begin{equation*}
 \rho(x,y)=\omega(\|x-y\|_2),\;\forall\;x,y\in\ell_2.
\end{equation*}

\begin{lemma}\label{lem:gauge}
The function $\omega$ is a strongly normalized nontrivial gauge.  In
particular, $\rho$ is a metric of $\tilde{X}$.  The topologies induced by
the metrics $\rho$ and $d(x,y)=\|x-y\|_2$   coincide  on $\ell_2$.
\end{lemma}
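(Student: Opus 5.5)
We verify the gauge axioms for $\omega(t)=\max\{t,\sqrt t\}$ one at a time, then deduce the metric and topology statements.

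\textbf{Elementary properties.} Since $\omega$ is the maximum of two continuous, increasing functions vanishing at $0$, it is continuous, increasing, and satisfies $\omega(0)=0$. On $[0,1]$ we have $\sqrt t\ge t$, so $\omega(t)=\sqrt t\ge t$ there. On $[1,\infty)$ we have $t\ge\sqrt t$, so $\omega(t)=t$; hence $\omega$ is strongly normalized. Nontriviality follows from $\omega(t)/t=t^{-1/2}\to\infty$ as $t\downarrow0$.

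\textbf{Subadditivity.} This is the one step needing care. Both $t\mapsto t$ and $t\mapsto\sqrt t$ are subadditive; the latter because $(\sqrt s+\sqrt t)^2=s+t+2\sqrt{st}\ge s+t$. For any $s,t\ge0$ we then get
\[
\omega(s+t)=\max\{s+t,\sqrt{s+t}\}\le \max\{s+t,\sqrt s+\sqrt t\}\le \omega(s)+\omega(t),
\]
because $s+t\le\omega(s)+\omega(t)$ and $\sqrt s+\sqrt t\le \omega(s)+\omega(t)$ termwise. No case split on whether $s,t$ exceed $1$ is needed, since a maximum of subadditive functions is bounded by the sum of the maxima in this way.

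\textbf{Metric.} Symmetry of $\rho$ is clear. Positivity holds because $\omega(t)=0$ if and only if $t=0$. For the triangle inequality,
\[
\rho(x,z)=\omega(\|x-z\|_2)\le\omega(\|x-y\|_2+\|y-z\|_2)\le\rho(x,y)+\rho(y,z),
\]
where the first step uses that $\omega$ is increasing and the second uses subadditivity.

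\textbf{Topologies.} The map $\omega$ is a strictly increasing continuous bijection of $[0,\infty)$ onto itself with $\omega(0)=0$, so its inverse is also continuous at $0$. Concretely, $\omega^{-1}(s)=s^2$ for $s\le1$ and $\omega^{-1}(s)=s$ for $s\ge1$. It follows that $\rho(x_n,x)\to0$ if and only if $\|x_n-x\|_2\to0$. Equivalently, every $\rho$-ball about $x$ contains a norm ball about $x$ and vice versa: the $\rho$-ball of radius $r$ is exactly the norm ball of radius $\omega^{-1}(r)$. Hence the two metrics induce the same topology.
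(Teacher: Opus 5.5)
Your proof is correct and follows essentially the same route as the paper: subadditivity via the termwise bound on the maximum, monotonicity plus subadditivity for the triangle inequality, and a direct comparison of $\rho$ with the norm metric for the topology. The only cosmetic difference is that you identify $\rho$-balls with norm balls through the explicit inverse $\omega^{-1}$, whereas the paper uses $\omega(t)\ge t$ together with continuity of $\omega$ at $0$.
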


\begin{proof}
The functions $t\mapsto t$ and $t\mapsto\sqrt t$ are continuous,
increasing, and subadditive on $[0,\infty)$.  The maximum of two
nonnegative increasing subadditive functions is again subadditive,
because
\[
 \max\{a_1+a_2,b_1+b_2\}
 \leq \max\{a_1,b_1\}+\max\{a_2,b_2\}.
\]
Thus $\omega$ is continuous, increasing, subadditive, and vanishes at
$0$.  It satisfies $\omega(t)\geq t$ for $0\leq t\leq1$, while
\[
 \frac{\omega(t)}t=\frac1{\sqrt t}\longrightarrow\infty,\;
 t\downarrow0.
\]
Moreover, $\omega(t)=t$ for $t\geq1$, so it is strongly normalized.
Subadditivity of $\omega(t)$ proves the triangle inequality for $\rho$. And hence $\rho$ is a metric.  Finally,
$\omega$ is continuous at $0$ and $\omega(t)\geq t$, which proves the
claim about the topology.
\end{proof}

Put
\[
 Y=\F(\tilde{X})=\F(\ell_2,\rho)=\F_\omega(\ell_2),
\]
and denote its canonical embedding by $\delta: \tilde{X}(=(\ell_2,\rho))\to Y$.  Thus
\begin{equation}\label{eq:delta-distance}
 \|\delta(u)-\delta(v)\|_Y
 =\rho(u,v)=\max\{\|u-v\|_2, \sqrt{\|u-v\|_2}\},\;\forall\;u,v\in \ell_2.
\end{equation}
By Lemma~\ref{lem:gauge} and Theorem~\ref{thm:kalton}, $Y$ has the
Schur property.  The space $\tilde{X}=(\ell_2,\rho)$ is separable because it has the
same topology as $\ell_2$, and hence $Y$ is separable as well.

\begin{proof}[Proof of Theorem~\ref{thm:main}]
Let $X=(\ell_2,
\;\|\cdot\|_2)$ and $Y=\F(\tilde{X})=\F(\ell_2,\rho)=\F_\omega(\ell_2)$.
Fix $\varepsilon>0$ and define
\begin{equation*}
 f_\varepsilon(x)=4\varepsilon\,
 \delta\!\left(\frac{x}{4\varepsilon}\right),\;\forall\;
 x\in\ell_2.
\end{equation*}
For $x,y\in X=(\ell_2,\;\|\cdot\|_2) $, put $r=\|x-y\|_2$.  Equation
\eqref{eq:delta-distance} gives
\begin{align}
 \|f_\varepsilon(x)-f_\varepsilon(y)\|_Y
 &=4\varepsilon\,
   \omega\!\left(\frac{r}{4\varepsilon}\right)\notag\\
 &=\max\{r,2\sqrt{\varepsilon r}\}.
 \label{eq:exact-distance}
\end{align}
Consequently the distance error is
\[
 \max\{0,2\sqrt{\varepsilon r}-r\}.
\]
Note that
\begin{equation}\label{eq:square}
 2\sqrt{\varepsilon r}-r
 =\varepsilon-(\sqrt r-\sqrt\varepsilon)^2
 \leq\varepsilon,
\end{equation}
this yields  \eqref{eq:eps}.  Its lower inequality also
shows that $f_\varepsilon$ is injective and that
$f_\varepsilon^{-1}:f_\varepsilon(X)\to X$ is $1$-Lipschitz.  Formula
\eqref{eq:exact-distance} shows directly that $f_\varepsilon$ is
uniformly continuous.

It remains to exclude an exact isometric embedding.  Suppose that
$g:X\to Y$ is  an isometric embedding.   Theorem~\ref{thm:GK} then yields a linear
isometric embedding
\[
 T:X\longrightarrow Y.
\]
Let $(e_n)$ be the standard unit vector basis of $X=(\ell_2,\;\|\cdot\|_2)$.  Since
$e_n\rightharpoonup0$ (weakly converge to $0$) in $X$, bounded linearity of $T$ gives
$Te_n\rightharpoonup0$ in $Y$.  The Schur property of $Y$ would imply
$\|Te_n\|_Y\to0$.  This is impossible because $T$ is an isometry and
$\|Te_n\|_Y=1$ for every $n$.  Hence no such $g$ exists.
\end{proof}

\begin{remark}
The map $f_\varepsilon$ is an exact $\varepsilon$-isometry, equality occurs in \eqref{eq:square} when
$\|x-y\|_2=\varepsilon$.
\end{remark}

\bibliographystyle{amsalpha}

\begin{thebibliography}{AcBeRu}

\bibitem {cheng4} L. Cheng, Y. Zhou, On perturbed metric-preserved mappings and their stability
characterizations, J. Funct. Anal. 266(2014), 4995-5015.
\bibitem {figiel} T. Figiel, On non linear isometric embeddings of normed linear spaces, Bull. Acad. Polon. Sci. Math. Astro. Phys.16 (1968),185-188.
\bibitem {gode} G. Godefroy, N.J. Kalton, Lipschitz-free Banach spaces, Studia Math. 159 (2003), 121-141.
\bibitem {hy1} D.H. Hyers, S. M. Ulam, On approximate isometries, Bull.
Amer. Math. Soc.51 (1945), 288-292.
\bibitem{kalton}
N.J. Kalton, Spaces of Lipschitz and H\"older functions and their applications,
Collect. Math. 55 (2004) (2), 171--217.
\bibitem {ma} S. Mazur, S. Ulam, Sur les transformations isom\'{e}triques d'espaces vectoriels norm\'{e}s, C.R. Acad. Sci. Paris 194(1932), 946-948.
\bibitem {om} M. Omladi\v{c}, P. \v{S}emrl, On non linear perturbations of
isometries, Math. Ann. 303 (1995), 617-628.









\end{thebibliography}

\end{document}